\def\Sp{
{\rm Sp}}
\def\SO{
{\rm SO}}
\def\SU{
{\rm SU}}
\def\U{
{\rm U}}
\theoremstyle{plain}
\newtheorem{theorem}{Theorem}
\newtheorem{definition}[theorem]{Definition}
\newtheorem{proposition}[theorem]{Proposition}
\newtheorem{example}[theorem]{Example}
\newtheorem{remark}[theorem]{Remark}
\newenvironment{proof}[1][{}]{\par\noindent\textbf{Proof{#1}: }}{\hspace*{\fill}$\blacksquare$\smallskip\noindent\par}
\newcommand{\Z}{{\mathbb Z}}
\newcommand{\R}{{\mathbb R}}
\renewcommand{\H}{{\mathbb H}}
\renewcommand{\SO}{{\rm SO}}
\renewcommand{\U}{{\rm U}}
\renewcommand{\Sp}{{\rm Sp}}
\begin{document}

\thispagestyle{plain}

\title[Twistor spaces and (non)K\"ahler structures]{Twistor spaces and 
compact manifolds admitting both K\"ahler and non-K\"ahler structures}
\author[Ljudmila Kamenova]{Ljudmila Kamenova}

\date{}

\maketitle

\comm{Communicated by Ivailo Mladenov}\

\begin{abstract}
In this expository paper we review some twistor techniques and recall the 
problem of finding compact differentiable manifolds that can carry both 
K\"ahler and non-K\"ahler complex structures. Such examples were constructed 
independently by M. Atiyah, A. Blanchard and E. Calabi in the $1950$'s. 
In the $1980$'s V. Tsanov gave an example of a simply connected manifold 
that admits both K\"ahler and non-K\"ahler complex structures - the twistor 
space of a $K3$ surface. Here we show that the quaternion twistor space of 
a hyperk\"ahler manifold has the same property. This paper is dedicated to 
the memory of V. Tsanov. \\[0.2cm] 
 \textsl{MSC}: 53C28, 32L25, 14J28, 53C26\\
 \textsl{Keywords}: twistor spaces, K3 surfaces, hyperk\"ahler manifolds 
\end{abstract}

\label{first}

\section[]{Introduction}

In this paper we discuss a couple of classical approaches to twistor theory. 
Roughly speaking, the twistor space $Z(M)$ is a family of (almost) complex 
structures on an orientable Riemannian manifold $(M, g)$ compatible with 
the given metric $g$ and the orientation. 
We are going to apply twistor techniques towards the problem of 
constructing simply connected compact manifolds that carry both 
K\"ahler and non-K\"ahler complex structures. 

Atiyah's idea behind his examples in \cite{aex} was to consider the set 
of all complex structures $M_n$ on the real torus $T^{2n} = \R^{2n}/\Z^{2n}$ 
coming from the complex vector space structures on $\R^{2n}$. The space 
$M_n$ is a complex manifold which is differentially a product of an 
algebraic variety and the torus $T^{2n}$, and therefore admits a K\"ahler 
structure. On the other hand, there exists a ``twisted'' complex structure 
on $M_n$ which is non-K\"ahler. This rationale works in many other cases, 
and in particular, one can produce simply connected examples of similar 
nature. In \cite{vv} Tsanov showed that the twistor space of a $K3$ 
surface is a simply connected $6$-dimensional compact manifold which 
carries both K\"ahler and non-K\"ahler complex structures. Here we give 
examples of twistor spaces of hyperk\"ahler manifolds and show that 
they also carry both K\"ahler and non-K\"ahler complex structures. 

Twistor theory was originally introduced by Penrose \cite{pen} and 
studied by Atiyah, Hitchin and Singer \cite{ahs} in four-dimensional 
Riemannian geometry. Given a four-dimensional manifold $M$ with a 
conformal structure, the twistor transform associates to $M$ the 
projective bundle of anti-self-dual spinors, which is a ${\mathbb S}^2$-bundle 
over $M$. In higher dimensions the notion of twistor space was generalized 
by B\'erard-Bergery and Ochiai in \cite{bbo}. Given a $2n$-dimensional 
oriented manifold with a conformal structure, the twistor space $Z(M)$ 
parametrizes almost complex structures on $M$ compatible with the 
orientation and the conformal structure. 
For quaternionic K\"ahler and hyperk\"ahler manifolds 
there is an alternative twistor generalization introduced by Salamon 
\cite{sal} and independently by B\'erard-Bergery (see Theorem 14.9 in 
\cite{bes}). 
For a quaternionic K\"ahler manifold $M$ (i.e., a manifold with 
holonomy group contained in $\Sp(n)\cdot \Sp(1)$) there is a natural 
${\mathbb S}^2$-bundle $Z_0(M)$ over $M$ of complex quaternionic structures 
on $M$, the ``quaternion twistor space''. The quaternion twistor 
construction applied to hyperk\"ahler manifolds gives examples of simply 
connected manifolds that carry both K\"ahler and 
non-K\"ahler complex structures. The idea behind this result is that on 
one hand, the quaternion twistor space of a hyperk\"ahler manifold is 
diffeomorphic to a product of K\"ahler manifolds, and on the other hand,  
the twistor complex structure is not K\"ahler. 

\begin{theorem}
Let $M$ be a hyperk\"ahler manifold of real dimension $4n$. Then 
the quaternion twistor space $Z_0(M)$ is a simply connected compact 
manifold that carries both K\"ahler and non-K\"ahler complex structures. 
\end{theorem}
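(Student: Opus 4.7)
The plan begins by identifying the smooth manifold underlying $Z_0(M)$. By construction the fibre of $Z_0(M)\to M$ over $p\in M$ is the sphere $\{aI_p+bJ_p+cK_p:a^2+b^2+c^2=1\}$ of compatible complex structures; since the hyperk\"ahler triple $(I,J,K)$ is globally defined on $M$, this $\mathbb{S}^2$-bundle is canonically trivial, yielding a diffeomorphism $Z_0(M)\cong M\times\mathbb{S}^2$. Compactness is then immediate, and simple connectedness follows because a compact (irreducible) hyperk\"ahler manifold is simply connected and $\mathbb{S}^2\cong\mathbb{CP}^1$ is too. To obtain a K\"ahler complex structure I equip the product $M\times\mathbb{CP}^1$ with the product complex structure (using, say, $I$ on $M$ and the standard structure on $\mathbb{CP}^1$); the sum of the pull-backs of the two K\"ahler forms is a K\"ahler form.

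\textbf{Step 2: The twistor complex structure $\mathcal{J}$.} Next I place on $Z_0(M)$ the integrable twistor complex structure $\mathcal{J}$ from Salamon's construction: $\pi\colon Z_0(M)\to\mathbb{CP}^1$ is $\mathcal{J}$-holomorphic, the fibre $\pi^{-1}(z)$ is $(M,I_z)$ where $I_z$ is the complex structure corresponding to $z\in\mathbb{S}^2$, and the Levi-Civita horizontal distribution is a $\mathcal{J}$-holomorphic subbundle. It then remains to show that $(Z_0(M),\mathcal{J})$ is \emph{not} K\"ahler---this is the substantive part of the argument.

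\textbf{Step 3: Non-K\"ahlerness, the main obstacle.} Suppose for contradiction that $\Omega$ is a K\"ahler form on $(Z_0(M),\mathcal{J})$. The fibres $M_z:=\pi^{-1}(z)$ form a holomorphic family of compact complex submanifolds over the connected base $\mathbb{CP}^1$, so the inclusions $i_z\colon M_z\hookrightarrow Z_0(M)$ are pairwise smoothly isotopic; identifying each $M_z$ with $M$ via the trivialisation of Step~1, the classes $i_z^*[\Omega]\in H^2(M,\mathbb{R})$ all coincide with a single class $\alpha$. Since $\Omega|_{M_z}$ is K\"ahler on $(M,I_z)$, $\alpha$ is a K\"ahler class---and in particular of Hodge type $(1,1)$---for \emph{every} complex structure $I_z$ in the twistor family. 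By Verbitsky's description of the $\mathrm{SU}(2)$-invariant part of $H^2(M,\mathbb{R})$, this forces $\alpha$ to be $q$-orthogonal (where $q$ is the Beauville--Bogomolov form) to each of $[\omega_I],[\omega_J],[\omega_K]$, which span a $3$-dimensional positive-definite subspace of $(H^2(M,\mathbb{R}),q)$. Since $q$ has signature $(3,b_2(M)-3)$, the orthogonal complement of this subspace is negative-definite, so $q(\alpha)\le 0$. On the other hand a K\"ahler class on a compact hyperk\"ahler manifold lies in the positive cone of $q$, so $q(\alpha)>0$, yielding the desired contradiction. The genuinely hard step is this last one; everything hinges on the two classical facts about $q$---its signature $(3,b_2-3)$ and the inclusion of the K\"ahler cone in its positive cone---which one has to either cite or briefly recall.
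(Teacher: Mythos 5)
Your proposal is correct, and Steps 1--2 coincide with the paper's own argument (trivialisation of the sphere bundle by the parallel triple $\{I,J,K\}$, product K\"ahler structure, $\pi_1(M\times{\mathbb S}^2)=0$). Your Step 3 --- the non-K\"ahlerness of the twistor complex structure, which is indeed the substantive point --- takes a genuinely different route. The paper argues via projectivity: it computes $H^0(Z_0(M),\Omega^i_{Z_0(M)})=0$ from the splitting $TZ_0(M)|_{\mathbb{CP}^1}={\cal O}(2)\oplus{\cal O}(1)^{\oplus 2n}$ along twistor lines, deduces that a K\"ahler $Z_0(M)$ would have $H^2=H^{1,1}$ and hence contain rational K\"ahler classes, so would be projective by Kodaira embedding, and then contradicts Fujiki's theorem that the general fibre of $p:Z_0(M)\to\mathbb{CP}^1$ carries neither curves nor effective divisors and so cannot be projective. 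You instead restrict a hypothetical K\"ahler class to the fibres, note that the resulting class $\alpha\in H^2(M,\R)$ is K\"ahler, hence of type $(1,1)$, for every complex structure in the twistor family, and play Verbitsky's ${\rm SU}(2)$-invariance of such classes against the signature $(3,b_2-3)$ of the Beauville--Bogomolov form and the $q$-positivity of K\"ahler classes. Your route avoids both Fujiki's non-projectivity theorem and the Kodaira embedding theorem, at the price of invoking the Beauville--Bogomolov/Verbitsky package; the paper's route yields the stronger byproducts that the generic fibre is not even projective and (in the subsequent remark) that $a(Z_0(M))=1$. One caveat, common to both proofs: the $q$-form machinery you use (and equally Fujiki's theorem and the claim $\pi_1(M)=0$) requires $M$ to be irreducible hyperk\"ahler ($h^{2,0}=1$, compact, simply connected), not merely of holonomy contained in $\Sp(n)$ as in the paper's definition; you flag this with your parenthetical ``(irreducible)'', and it should be made a standing hypothesis rather than an aside.
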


\section{Basics on Twistor Geometry} 

Fix a scalar product and an orientation on the $2n$-dimensional 
real space $\R^{2n}$. Denote the type-DIII compact hermitian symmetric space 
$\frac{\SO(2n)}{\U(n)}$ by $\Gamma_n$. Notice that, as a hermitian symmetric 
space, $\Gamma_n$ is a complex K\"ahler manifold. The space $\Gamma_n$ can 
be identified with the set of complex structures on $\R^{2n}$, compatible 
with the metric and the given orientation. Indeed, if $J \in \SO(2n)$ with 
$J^2 = -1$ is a complex structure on $\R^{2n}$, then the group $\SO(2n)$ 
acts on $J$ by conjugation. The isotropy group is $\U(n) \subset \SO(2n)$. 
The real dimension of $\Gamma_n$ is $n(n-1)$. 

\begin{definition}
Let $M$ be an oriented Riemannian manifold of dimension $2n$. The holonomy 
group of $M$ is contained in $\SO(n)$. Denote by $P$ the associated 
principal $\SO(2n)$-bundle of orthonormal linear frames on $M$. The 
fiber bundle $Z(M) = P \times_{\SO(2n)} \Gamma_n \rightarrow M$ is called 
the {\em twistor bundle} or {\em twistor space} of $M$. Equivalently, 
$Z(M)$ can be identified with $P/\U(n)$. 
\end{definition}

The dimension of $Z(M)$ is even: indeed, 
${\rm dim}_\R Z(M) = {\rm dim}_\R M + {\rm dim}_\R \Gamma_n = 
2n + n(n-1) = n(n+1)$. 
The twistor space $Z(M)$ has a ``tautological'' almost complex structure $J$ 
which we describe here. 
Denote by $p: Z(M) \rightarrow M$ the projection. For a point $z \in Z(M)$, 
let its image be $x=p(z) \in M$. The fiber of $p$ over $x$ is 
$p^{-1}(x) = \Gamma_n$, and therefore $z \in \Gamma_n$ can be considered 
as a complex structure $I_z$ on the tangent space $T_xM$. The Riemannian 
connection of $M$ determines a splitting of the tangent space 
$T_z Z(M) = V_z \oplus H_z$ into vertical and horizontal parts. The 
vertical part $V_z$ is identified with the tangent space to the fiber 
$p^{-1}(x) = \Gamma_n$ at $z$, and it has an integrable almost complex 
structure $K$. The connection of $M$ defines an isomorphism 
$H_z \cong T_xM$ and we can consider $I_z$ as a complex structure on $H_z$. 

\begin{definition}
The almost complex structure $J$ on $Z(M)$ is defined by 
$J_z = K\oplus I_z: V_z \oplus H_z \rightarrow V_z \oplus H_z$. 
As $z$ varies in $p^{-1}(x) = \Gamma_n$, the projection of $J_z$ on 
the horizontal part $H_z \cong T_xM = \R^{2n}$ varies in the space of 
complex structures compatible with the metric and orientation. 
\end{definition}

When ${\rm dim}_\R (M) =4$, the fiber $\Gamma_2= {\mathbb S}^2$ and the 
generalized twistor definition coincides with the classical definition 
which was first introduced for Riemannian $4$-manifolds. 
Twistor spaces can also be defined in terms of spinors. In our notations 
$Z(M)$ coincides with the projectivized bundle ${\mathbb P} (P{\mathbb S}^+)$
of positive pure spinors on $M$ (Proposition 9.8 in \cite{lm}). 

\begin{proposition} 
Let $M$ be an oriented Riemannian manifold of dimension $2n$. Then the 
orthogonal orientation preserving almost complex structures on $M$ 
are in one-to-one correspondence with the sections of the twistor 
space $Z(M)={\mathbb P} (P{\mathbb S}^+)$. In particular, $M$ is K\"ahler 
if and only if there is a parallel cross section of $Z(M)$. 
\end{proposition}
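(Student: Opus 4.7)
The plan is to unpack the two realizations of $Z(M)$ (as the associated bundle $P \times_{\SO(2n)} \Gamma_n$ and as $P/\U(n)$) in order to identify sections pointwise with orthogonal orientation-preserving almost complex structures, and then use the Levi-Civita connection on $P$ to translate ``parallel section'' into the familiar condition $\nabla I = 0$ on the corresponding tensor.

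First I would establish the set-theoretic bijection for the first claim. By construction, the fiber of $Z(M)$ over a point $x \in M$ is canonically identified with the space of orthogonal orientation-preserving complex structures on $T_xM$: picking any orthonormal frame at $x$ identifies $T_xM$ with $\R^{2n}$ and the fiber with $\Gamma_n$, and a change of frame by $g \in \SO(2n)$ acts on $\Gamma_n$ by conjugation, so the fiber is intrinsically the $\SO(2n)$-orbit of compatible complex structures on $T_xM$. Hence a smooth section $s: M \to Z(M)$ is exactly the data of a smoothly varying choice $\{I_x\}_{x\in M}$ of orthogonal orientation-preserving complex structures on the tangent spaces, i.e., an orthogonal orientation-preserving almost complex structure on $M$. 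The correspondence $s \leftrightarrow I$ is clearly bijective.

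For the Kähler statement I would use the induced connection. The Levi-Civita connection of $(M,g)$ is a principal $\SO(2n)$-connection on $P$ and therefore induces a connection on every associated bundle, in particular on $Z(M)$. Under the bijection above, a section $s$ is parallel iff the corresponding $\SO(2n)$-equivariant map $P \to \Gamma_n$ is constant along horizontal lifts of curves in $M$, which unwinds precisely to $\nabla I = 0$ viewing $I$ as an endomorphism tensor on $TM$. Combined with the first part, parallel sections of $Z(M)$ are in bijection with orthogonal orientation-preserving almost complex structures $I$ with $\nabla I = 0$. The final step invokes the classical characterization of Kähler metrics: if $I$ is orthogonal and $\nabla I = 0$, then the Nijenhuis tensor $N_I$ vanishes (since $\nabla$ is torsion-free, $N_I$ is a universal polynomial in $\nabla I$), so $I$ is integrable by Newlander--Nirenberg, and the fundamental two-form $\omega(X,Y)=g(IX,Y)$ is parallel hence closed; conversely, a Kähler metric has $\nabla I = 0$ by definition.

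The only step that requires real care is the translation between parallel sections of the associated bundle $Z(M)$ and the tensorial condition $\nabla I = 0$; this is the main technical point of the argument but follows from the standard formalism of connections on associated bundles. Everything else is a matter of unpacking the definitions of $Z(M)$ and recalling the standard tensorial description of the Kähler condition.
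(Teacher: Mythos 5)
Your argument is correct, and it is worth noting that the paper itself offers no proof of this Proposition at all: it is quoted as a known fact, with the spinorial description $Z(M)=\mathbb{P}(P\mathbb{S}^+)$ pointing to Proposition 9.8 of Lawson--Michelsohn's \emph{Spin Geometry}, where the statement is established in the language of (pure) spinors. You instead work entirely with the frame-bundle presentation $Z(M)=P\times_{\SO(2n)}\Gamma_n=P/\U(n)$, identifying the fiber over $x$ with the conjugation orbit of compatible complex structures on $T_xM$, and then using the connection induced by the Levi-Civita connection on the associated bundle to translate ``parallel section'' into $\nabla I=0$ on the endomorphism bundle. This is the standard differential-geometric route and it buys self-containedness: the key identities you invoke (that the Nijenhuis tensor is a universal expression in $\nabla I$ and the torsion, and that $d\omega$ is the alternation of $\nabla\omega$) reduce the K\"ahler characterization to the classical fact that $(M,g,I)$ is K\"ahler precisely when $\nabla I=0$. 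Two small reading points you handle implicitly but should make explicit: ``$M$ is K\"ahler'' here must mean that the \emph{given} metric $g$ is K\"ahler for some compatible complex structure (the statement is false if one allows an unrelated K\"ahler metric), and the complex structure must induce the \emph{given} orientation, since $Z(M)$ only parametrizes orientation-compatible structures; with those conventions your proof is complete.
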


From the definition of the almost complex structure $J$ on $Z(M)$ 
it is clear that the integrability of $J$ depends only on the Riemannian 
metric $g$ on $M$. When ${\rm dim}_\R (M) >4$, the Weyl tensor $W$ is 
irreducible, however when ${\rm dim}_\R (M) >4$, the Weyl tensor splits 
into two parts: self-dual and anti-self-dual $W=W_+ \oplus W_-$. 

\begin{theorem} \label{compl}
Let $M$ be an oriented Riemannian manifold of dimension $2n$. 
The almost complex structure $J$ on $Z(M)$ is integrable if and only if: 
\begin{itemize}
\item $M$ is anti-self-dual, i.e., the self-dual part of the Weyl tensor 
vanishes: $W_+=0$, when ${\rm dim}_\R (M) =4$ (Theorem 4.1 in \cite{ahs}); 
\item $M$ is conformally flat (i.e., $W=0$) when ${\rm dim}_\R (M) >4$ 
(section 3 in \cite{bbo}). 
\end{itemize}
\end{theorem}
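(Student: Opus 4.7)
The plan is to apply the Newlander--Nirenberg theorem: $J$ is integrable if and only if its Nijenhuis tensor $N_J(X,Y)=[X,Y]+J[JX,Y]+J[X,JY]-[JX,JY]$ vanishes identically. Since the Levi--Civita connection on $M$ induces a decomposition $TZ(M)=V\oplus H$ compatible with the block form $J=K\oplus I$, I would test $N_J$ on pairs drawn from $V$ and $H$, reducing to three cases.

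First, on pairs of vertical vector fields, $N_J(V,V)=0$ because $J|_V=K$ is the standard K\"ahler (hence integrable) complex structure of the hermitian symmetric space $\Gamma_n$. Second, the mixed term $N_J(V,H)$ I would expand by computing brackets of a vertical vector field (tangent to the fiber $\Gamma_n\subset\mathrm{End}(T_xM)$) with a horizontal lift; using the fact that the connection preserves the horizontal distribution and that the Levi--Civita connection is torsion-free, one checks this contribution vanishes identically.

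The essential content lies in the horizontal-horizontal term. For horizontal lifts $\widetilde X,\widetilde Y$ at a point $z=I_z\in\Gamma_n$ of tangent vectors $X,Y\in T_xM$, the vertical component of $[\widetilde X,\widetilde Y]$ is precisely the curvature $R(X,Y)\in\mathfrak{so}(T_xM)$ acting on $I_z$. Substituting into the formula for $N_J$ converts the integrability condition into an algebraic identity on the Riemann tensor: the vertical component of $N_J(\widetilde X,\widetilde Y)$ must vanish for every choice of $z\in\Gamma_n$. I would next decompose $R$ into its irreducible $\mathrm{SO}(2n)$-pieces --- scalar curvature, traceless Ricci, and Weyl tensor $W$ --- and verify by symmetry that only the Weyl piece survives in the obstruction, since the scalar and traceless Ricci components enter through terms that cancel under the $J$-conjugation in the Nijenhuis formula.

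The main obstacle, and the source of the dimensional dichotomy, is analyzing which part of $W$ actually obstructs integrability. When $\dim_\R M>4$, the Weyl tensor is $\mathrm{SO}(2n)$-irreducible and no reduction is possible: pointwise the algebraic condition forces $W\equiv 0$, as shown by B\'erard-Bergery--Ochiai in section~3 of \cite{bbo}. When $\dim_\R M=4$, however, $\Lambda^2 T^*M$ splits under the Hodge star as $\Lambda^2_+\oplus\Lambda^2_-$, and the fiber $\Gamma_2=\mathbb{S}^2$ is identified with the unit sphere in one of these summands; consequently only half the Weyl tensor acts on the fiber, and the condition becomes $W_+=0$, as in Theorem~4.1 of \cite{ahs}. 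Carrying out this representation-theoretic bookkeeping --- especially the identification of $\Gamma_2$ inside $\Lambda^2_\pm$ and the compatibility of $K$ with the curvature action --- is the delicate step I expect to consume most of the work.
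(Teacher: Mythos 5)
The paper does not actually prove this theorem: it is quoted as a known result, with the two cases attributed to Theorem~4.1 of \cite{ahs} and to Section~3 of \cite{bbo}. Your outline is therefore not competing with an argument in the text; it reconstructs the standard proof from those references, and it is correct at the level of strategy: vanishing of $N_J$ on vertical pairs because the fibre $\Gamma_n$ is a complex manifold, vanishing of the mixed terms for the Levi--Civita connection, and reduction of the horizontal--horizontal term to the algebraic condition that the $(0,2)$-part (with respect to each $I_z$) of the curvature, projected to the vertical tangent space at $z$, vanishes for every $z$ in the fibre. Two places in your sketch carry essentially all of the work and are currently only asserted. First, the mixed term $N_J(V,H)$ does not vanish for formal reasons: it vanishes because the connection is metric and torsion-free, and for a general metric connection with torsion this term does contribute, so the computation must actually be carried out. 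Second, the claim that ``only the Weyl piece survives'' and that irreducibility then ``forces $W\equiv 0$'' requires two separate verifications: (i) that the scalar and traceless-Ricci components of $R$ drop out of the obstruction --- which is most cleanly seen from the conformal invariance of the whole twistor construction rather than from ad hoc cancellations in the Nijenhuis formula --- and (ii) that the resulting $\SO(2n)$-equivariant map from the space of Weyl tensors to the space of obstructions is not identically zero, since Schur's lemma alone is equally consistent with the condition being vacuous. In dimension $4$ the same equivariance argument applied to the two irreducible summands $W_\pm$ of the Weyl module yields exactly the asymmetric answer $W_+=0$, as you indicate. With those two points filled in, your plan is precisely the proof given in \cite{ahs} and \cite{bbo}.
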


Another important question in complex geometry is to determine when a 
given manifold is K\"ahler. 

\begin{theorem} \label{kahl}
The twistor space $Z(M)$ of a compact oriented Riemannian manifold $M$ 
is K\"ahler if and only if: 
\begin{itemize}
\item $M$ is conformally equivalent to the complex projective space 
${\mathbb {CP}}^2$ or to the sphere ${\mathbb S}^4$ when 
${\rm dim}_\R (M) =4$ (Theorem 6.1 in \cite{hit});
\item $M$ is conformally equivalent to the sphere ${\mathbb S}^{2n}$ when 
${\rm dim}_\R (M) =2n > 4$ (see \cite{bmn}). 
\end{itemize}
\end{theorem}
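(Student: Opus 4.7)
The plan is to handle the two dimensional regimes separately. In either case the first step is to observe that a K\"ahler structure on $Z(M)$ requires $J$ to be integrable, so Theorem~\ref{compl} applies: for ${\rm dim}_\R M = 4$ we may assume $M$ is anti-self-dual, and for ${\rm dim}_\R M > 4$ we may assume $M$ is conformally flat. The converse direction is straightforward once one identifies $Z({\mathbb S}^4) \cong {\mathbb {CP}}^3$, $Z({\mathbb {CP}}^2) \cong F(1,2;3)$, and $Z({\mathbb S}^{2n}) \cong$ a smooth quadric hypersurface in ${\mathbb {CP}}^{2n+1}$, each of which is a projective K\"ahler manifold.

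For ${\rm dim}_\R M = 4$, I would exploit the rigid complex-geometric structure of the twistor space. It is a compact complex $3$-fold carrying a fixed-point-free antiholomorphic involution $\tau$ (antipodal on each fiber) together with a four-parameter family of rational curves $\ell_x \cong {\mathbb {CP}}^1$ with normal bundle ${\mathcal O}(1) \oplus {\mathcal O}(1)$, whose union is all of $Z(M)$. The tangent sequence along $\ell_x$ gives $TZ|_{\ell_x} \cong {\mathcal O}(2) \oplus {\mathcal O}(1) \oplus {\mathcal O}(1)$, so $-K_Z \cdot \ell_x = 4$. If $\omega$ is a K\"ahler form on $Z(M)$, then $\omega \cdot \ell_x > 0$ for every twistor line, and together with the fact that twistor lines sweep out $Z(M)$ this forces $-K_Z$ to be ample, making $Z(M)$ a Fano $3$-fold. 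The classification of smooth Fano $3$-folds, further restricted by the existence of $\tau$ and of the covering family of lines with prescribed normal bundle, leaves only the possibilities ${\mathbb {CP}}^3$ and $F(1,2;3)$, corresponding to $M = {\mathbb S}^4$ and $M = {\mathbb {CP}}^2$ respectively.

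For ${\rm dim}_\R M = 2n > 4$, the condition $W = 0$ says $M$ is locally conformally flat. The plan is to pass to the universal cover $\widetilde M$ and invoke Kuiper's theorem to produce a conformal developing map $\Phi \colon \widetilde M \to {\mathbb S}^{2n}$. This lifts to a map of twistor spaces $Z(\widetilde M) \to Z({\mathbb S}^{2n})$ intertwining the $\Gamma_n$-fibrations and the real structures, and the lifted K\"ahler form on $Z(\widetilde M)$, being strictly positive on fibers and on twistor lines, can be used to show that $\Phi$ is a global diffeomorphism and that no nontrivial deck transformation is compatible with the K\"ahler geometry, leaving $M$ conformally equivalent to ${\mathbb S}^{2n}$. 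This is the content of \cite{bmn}.

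The hard part in both regimes is converting the rich pointwise structure on $Z(M)$ --- integrability, K\"ahler form, family of rational twistor curves with specified normal bundle, and real involution --- into a global geometric identification of $M$ with a specific model. In dimension four this passes through the classification of Fano $3$-folds (together with the additional twistor constraints), which is substantial; in higher dimensions one must simultaneously control the developing map, the deck action, and the K\"ahler geometry on $Z(M)$.
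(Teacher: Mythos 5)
The paper does not actually prove Theorem~\ref{kahl}: it is quoted as a known result with pointers to \cite{hit} and \cite{bmn}. So the only question is whether your sketch would stand on its own, and there is a genuine gap at its central step. In the four-dimensional case you claim that $\omega\cdot\ell_x>0$ on the covering family of twistor lines, together with $-K_Z\cdot\ell_x=4$, ``forces $-K_Z$ to be ample.'' It does not. A K\"ahler form is positive on \emph{every} curve of \emph{every} compact K\"ahler manifold, so the first inequality carries no information about $-K_Z$; and positivity of $-K_Z$ on one covering family of rational curves says only that $Z$ is uniruled --- it controls neither $-K_Z$ on other curves nor $(-K_Z)^2$ on surfaces, as any ampleness criterion (Nakai--Moishezon, Kleiman) would require. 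The actual engine of Hitchin's proof is the one ingredient your sketch lists but never uses: the antiholomorphic involution $\iota$. Since $\iota$ is antiholomorphic, $-\iota^*\omega$ is again a K\"ahler form, so $\omega'=\tfrac12(\omega-\iota^*\omega)$ is a K\"ahler class with $\iota^*\omega'=-\omega'$; combined with $h^{2,0}(Z)=0$ and index/vanishing theorems this is what shows $Z$ is Fano with $K^{-1/2}$ effective, after which Hitchin analyzes the linear system $|K^{-1/2}|$ directly rather than invoking the classification of Fano $3$-folds. Your route could perhaps be repaired to pass through that classification, but only after the Fano property is genuinely established, which is exactly the step you have skipped.

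Two further points. First, your model identification $Z({\mathbb S}^{2n})\cong$ a quadric in ${\mathbb{CP}}^{2n+1}$ contradicts the paper's own example: with this definition $Z({\mathbb S}^{2n})=\Gamma_{n+1}=\SO(2n+2)/\U(n+1)$, which is a quadric only for $n=3$ (namely $\Gamma_4=Q_6$); for general $n$ it is the spinor variety, still projective and K\"ahler, so the converse direction survives but the stated identification is wrong. Second, the higher-dimensional argument is too thin to assess: compact conformally flat manifolds include flat tori, hyperbolic manifolds, $S^1\times{\mathbb S}^{2n-1}$ and connected sums thereof, so the K\"ahler hypothesis on $Z(M)$ must do essentially all of the work of excluding them, and ``the lifted K\"ahler form can be used to show that $\Phi$ is a global diffeomorphism and that no nontrivial deck transformation is compatible'' is a restatement of the goal, not an argument. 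The result in \cite{bmn} is what is being cited for precisely this nontrivial elimination.
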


%For completeness we are going to mention the 
%method that de Bartolomeis, Migliorini and Nannicini used in \cite{bmn} 
%to prove that $Z({\mathbb S}^{2m})$ is the only K\"ahler twistor space 
%if $\dim M_\R =2m > 4$ (in our case $m=2n$). 
%If $Z(M)$ is K\"ahler, then $M$ is conformally flat 
%Riemannian manifold and they prove that $\dim_{\mathbb C} H^{1,1}(Z(M))=1$ 
%for conformally flat manifolds $M$. Therefore, the first Chern class 
%$c_1(Z(M))$ is proportional to a K\"ahler class $\kappa$ of $Z(M)$. 
%By restricting to a fiber one can see that $c_1(Z(M))$ and $\kappa$ have 
%the same sign, and therefore $c_1(Z(M))>0$. Hence $Z(M)$ admits a K\"ahler 
%metric of positive Ricci curvature and it is simply connected. Since the 
%fibers are connected, $M$ is also simply connected. Kuiper's classical 
%result states that a compact simply connected conformally flat manifold is 
%conformally diffeomorphic to a sphere. Conversely, the twistor space of 
%${\mathbb S}^{2m}$ is $\Gamma_{m+1}=\frac{\SO(2m+2)}{\U(m+1)}$ which is a 
%compact K\"ahler manifold. 

Notice that the sections of $p: Z(M) \rightarrow M$ represent almost 
complex structures on $M$ compatible with the Riemannian metric and the 
orientation. If the section is holomorphic, it represents an 
integrable almost complex structure. 

\begin{theorem} (Michelsohn, \cite{mic}) \label{mich}
Let $M$ be an oriented Riemannian manifold of even dimension with an 
almost complex structure $I$ determined by a projective spinor field 
$s \in \Gamma(Z(M))$. Then $I$ is integrable if and only if the section 
$s$ is holomorphic. 
\end{theorem}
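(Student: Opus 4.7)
The plan is to translate both integrability of $I$ and holomorphicity of $s$ into equivalent algebraic conditions on $\nabla I$, where $\nabla$ is the Levi-Civita connection, and then compare them.

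First I would analyze the differential $ds \colon T_xM \to T_{s(x)}Z(M)$. Since $p\circ s = \mathrm{id}_M$, the horizontal component of $ds(v)$ equals $v$ under the isomorphism $H_{s(x)}\cong T_xM$. The vertical space $V_{s(x)} \cong T_I\Gamma_n$ is identified with skew-symmetric endomorphisms of $T_xM$ that anti-commute with $I$, and under this identification the vertical component of $ds(v)$ is precisely $\nabla_v I$ (which does anti-commute with $I$, by differentiating $I^2 = -\mathrm{id}$). Since $J = K\oplus I_z$, the condition $ds\circ I = J\circ ds$ splits into a horizontal equation (automatic: both sides equal $Iv$) and a vertical equation $\nabla_{Iv} I = K\cdot \nabla_v I$. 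Because the complex structure on the hermitian symmetric space $\Gamma_n = \SO(2n)/\U(n)$ at the point $I$ is left-multiplication by $I$, this rewrites as
\[
\nabla_{Iv} I \;=\; I\circ \nabla_v I \qquad \text{for every } v \in T_xM.
\]

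Next I would relate this condition to integrability. Because $\nabla$ is torsion-free, a direct calculation yields
\[
N_I(X,Y) \;=\; (\nabla_{IX} I)Y - (\nabla_{IY} I)X - I(\nabla_X I)Y + I(\nabla_Y I)X.
\]
Setting $B_X := \nabla_{IX} I - I\circ \nabla_X I$, this becomes $N_I(X,Y) = B_X Y - B_Y X$. The implication ``$s$ holomorphic $\Rightarrow$ $I$ integrable'' is then immediate, since $B\equiv 0$ forces $N_I\equiv 0$.

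The main obstacle is the converse: recovering pointwise $B\equiv 0$ from the symmetry $B_X Y = B_Y X$ alone. The key additional input is that each $B_X$ is a skew-symmetric endomorphism of $T_xM$, since $\nabla_X I$ is skew-symmetric ($\nabla$ preserves $g$ and $I$ is $g$-skew) and $I\circ \nabla_X I$ is then skew-symmetric as well. Defining $T(X,Y,Z) := g(B_X Y, Z)$, we have $T(X,Y,Z) = T(Y,X,Z)$ from $N_I=0$ and $T(X,Y,Z) = -T(X,Z,Y)$ from skew-symmetry of $B_X$. A three-step cyclic permutation on $(X,Y,Z)$ alternately applying these two identities yields $T(X,Y,Z) = -T(X,Y,Z)$, so $T\equiv 0$ and hence $B\equiv 0$. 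Therefore $N_I = 0$ forces $s$ to be holomorphic, completing the equivalence.
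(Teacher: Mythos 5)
Your argument is correct. Note that the paper itself offers no proof of this statement --- it is quoted from Michelsohn's paper \cite{mic} --- so there is nothing internal to compare against; what you have written is essentially the standard proof. The two key computations both check out: the identification of the vertical component of $ds(v)$ with $\nabla_v I$ (valid because the Levi--Civita connection preserves the subbundle of metric-compatible complex structures inside ${\rm End}(TM)$), and the torsion-free expansion of the Nijenhuis tensor as $N_I(X,Y)=B_XY-B_YX$ with $B_X=\nabla_{IX}I-I\circ\nabla_XI$. Your resolution of the only nontrivial direction --- that the symmetry $B_XY=B_YX$ together with skew-symmetry of each $B_X$ forces $B\equiv 0$ via the standard ``symmetric in two slots, antisymmetric in two overlapping slots'' permutation argument --- is exactly the right additional input, and is the step a naive attempt would miss. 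The one point worth flagging is the sign convention for the fiber complex structure $K$ on $T_I\Gamma_n$: you assert it is left multiplication by $I$, and indeed the theorem is only true for that choice (the opposite orientation gives the Eells--Salamon-type structure, for which holomorphicity of $s$ is \emph{not} equivalent to integrability of $I$); since the paper leaves this convention implicit, stating it explicitly, as you do, is the honest thing to do, though a complete account would verify that this is the convention under which $K$ is the integrable complex structure of the Hermitian symmetric space referred to in the paper's definition of $J$.
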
 

We conclude this section with some interesting examples of twistor spaces 
and fibers of the twistor projection in small dimensions. 

\begin{example}
Consider the sphere ${\mathbb S}^{2n}$ as the conformal compactification of 
$\R^{2n}$. Then from the topology of fiber bundles it is clear that 
$Z({\mathbb S}^{2n})= \Gamma_{n+1}$. This implies that there are no 
complex structures on ${\mathbb S}^{6}$ compatible with the standard metric. 
Indeed, if $s: {\mathbb S}^{6} \rightarrow Z({\mathbb S}^{6}) = \Gamma_4$ 
is a section representing such a complex structure, then 
$s({\mathbb S}^{6}) \subset \Gamma_4$ would be a complex submanifold 
by Theorem \ref{mich}. This would imply that $s({\mathbb S}^{6})$ is 
K\"ahler which is impossible since $H^2({\mathbb S}^{6}, \R)=0$ and 
there couldn't be any K\"ahler $(1,1)$ classes on ${\mathbb S}^{6}$. 
\end{example}

\begin{example}
Due to a lot of special isomorphisms between symmetric spaces of small 
dimensions, we have the following isomorphisms: 
\begin{itemize} 
\item
$\Gamma_1$ is a point, because $\SO(2) \cong \U(1)$; 
\item
$\Gamma_2 = {\mathbb S}^2 = {\mathbb {CP}}^1$, because 
$\frac{\SO(4)}{\U(2)} \cong \frac{\U(2)}{\U(1)}$; 
\item
$\Gamma_3 =  {\mathbb {CP}}^3$, because 
$\frac{\SO(6)}{\U(3)} \cong \frac{\U(4)}{\U(1) \times \U(3)}$; 
\item
$\Gamma_4 = Q_6$ is a complex $6$-dimensional quadric, because 
$\frac{\SO(8)}{\U(4)} \cong \frac{\SO(8)}{\SO(2) \times \SO(6)}$. 
\end{itemize}
\end{example}

\section{Examples of Twistor Spaces of Surfaces}

Let $M$ be a $4$-dimensional oriented Riemannian manifold. 
By the construction of the almost complex structure $J = K\oplus I$ on 
the twistor space $Z(M)$, the fibers of the twistor map 
$p: Z(M) \rightarrow M$ are holomorphic rational curves in $Z(M)$, 
called ``twistor lines''. The normal bundle of each twistor line 
is ${\cal O}(1) \oplus {\cal O}(1)$. From the results in \cite{ahs} we 
get the following universal property of twistor spaces. 

\begin{theorem} \label{jahs}
Let $M$ be an anti-self-dual $4$-manifold, i.e.,  $W_+=0$. Then the 
twistor space $Z(M)$ is a complex manifold that admits a ``real 
structure'', i.e., an anti-holomorphic involution 
$\iota: Z(M) \rightarrow Z(M)$. The restriction of the involution 
$\iota$ on each twistor line $\mathbb {CP}^1 = {\mathbb S}^2$ is the 
antipodal map. Conversely, the holomorphic data above is sufficient to 
define a twistor space. More precisely, if $Z$ is a $3$-dimensional 
complex manifold with an antiholomorphic involution $\iota$, foliated 
by rational curves with normal bundles ${\cal O}(1) \oplus {\cal O}(1)$, 
such that $\iota$ restricted to a fiber of the foliation $\mathbb {CP}^1$ is 
the antipodal map, 
then $Z$ is a twistor space of an anti-self-dual $4$-manifold. 
\end{theorem}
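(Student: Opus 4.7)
The plan is to prove the two directions separately: first build the real structure on $Z(M)$, then reconstruct $M$ from the given holomorphic data via Kodaira's theory of relative deformations.

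For the forward direction, Theorem \ref{compl} already supplies that $Z(M)$ is a complex manifold whenever $W_+ = 0$, so only the existence and anti-holomorphicity of $\iota$ remain. I would define $\iota$ pointwise by sending $z \in Z(M)$, viewed as a compatible complex structure $I_z$ on $T_{p(z)}M$, to $-I_z$. On each fiber $p^{-1}(x) = \Gamma_2 \cong {\mathbb S}^2$ this is the involution $J \mapsto -J$, which is the antipodal map and the standard anti-holomorphic real structure of ${\mathbb {CP}}^1$. To check anti-holomorphicity globally, I would split $T_zZ(M) = V_z \oplus H_z$ via the Riemannian connection as in the definition of $J$. On $V_z$ the differential $d\iota$ reverses the fiber's complex structure $K$; on $H_z$ the underlying map of $T_xM$ is the identity (via the connection isomorphism), while the horizontal complex structure $I_z$ is replaced by $I_{\iota(z)} = -I_z$. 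Consequently $d\iota \circ J = -J \circ d\iota$ on both summands, which is the anti-holomorphic condition.

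For the converse, suppose $(Z,\iota)$ satisfies the stated hypotheses. Kodaira's theorem on deformations of compact complex submanifolds applies to each leaf $L \cong {\mathbb {CP}}^1 \subset Z$: since $N_L \cong {\cal O}(1) \oplus {\cal O}(1)$ has $H^1(L,N_L) = 0$ and $\dim H^0(L,N_L) = 4$, deformations are unobstructed and form a smooth complex $4$-dimensional moduli space $\tilde M$. The involution $\iota$ acts on $\tilde M$ anti-holomorphically, and I would take $M \subset \tilde M$ to be the real locus of those deformations $L'$ that are $\iota$-invariant and on which $\iota|_{L'}$ is the antipodal map. To equip $M$ with a conformal structure, I would use the Kodaira-Spencer identification $T_x M \otimes {\mathbb C} \cong H^0(L_x, N_{L_x})$. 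Writing $N_{L_x} \cong {\mathbb C}^2 \otimes {\cal O}(1)$, the space of sections is ${\mathbb C}^2 \otimes {\mathbb C}^2$, and sections with a common zero form a quadric cone cut out by the determinant — a non-degenerate quadratic form defined up to scale, hence a conformal class that descends to $M$ by $\iota$-reality.

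Finally I would identify $Z$ with the twistor space $Z(M)$ of this conformal $4$-manifold: the map sends $z \in Z$ to the pair $(x, \alpha_z)$, where $x$ is the foliation leaf through $z$ and $\alpha_z$ is the compatible complex structure on $T_xM$ determined by the tangent direction to $Z$ transverse to $L_x$ at $z$. The main obstacle is verifying that under this identification the tautological almost complex structure on $Z(M)$ coincides with the given complex structure on $Z$; once that is done, integrability of the latter forces $W_+ = 0$ by Theorem \ref{compl}, establishing the anti-self-duality of $M$ and closing the argument.
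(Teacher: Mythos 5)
Your outline is essentially the classical Atiyah--Hitchin--Singer argument: the real structure $I_z \mapsto -I_z$ (well defined here because in real dimension four $-I$ induces the same orientation as $I$) for the forward direction, and Kodaira's deformation theory of the ${\cal O}(1)\oplus{\cal O}(1)$ curves together with the null-cone (determinant) conformal structure on the real locus for the converse. The paper gives no proof of its own --- it states the theorem as a consequence of \cite{ahs} --- and your sketch follows the same route as that reference, so the approaches coincide.
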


Here are the first examples where a twistor-type (almost) complex structure 
was explored even before twistor spaces were defined. 

\begin{example} 
Consider the $4$-dimensional real torus $M = T^4 = \R^4/\Z^4$. The twistor 
space $Z(T^4)$ of the torus is the quotient of 
${\cal O}(1) \oplus {\cal O}(1)$ by a corresponding lattice action. 
In \cite{blan} Blanchard considered this example and showed that 
$Z(T^4)$ is a compact complex manifold admitting a holomorphic 
fibration to $\mathbb {CP}^1$ whose fibers are complex tori. 
In \cite{cal} Calabi also explored $6$-dimensional examples 
of oriented Riemannian manifolds embedded into the Cayley space,  
which are diffeomorphic to a K\"ahler manifold but do not admit a 
K\"ahler metric. In \cite{aex} Atiyah considered fiber 
spaces of higher-dimensional complex tori that 
arise as twistor spaces. On one hand, Atiyah established the 
non-K\"ahler property of these fiber spaces, and on the other hand, 
he showed that they are diffeomorphic to the product of two complex 
K\"ahler manifolds (namely, the given complex torus and a symmetric 
space of type DIII). 
\end{example}

Atiyah was interested in these examples, because they illustrate the 
existence of K\"ahler and non-K\"ahler complex structures on the same 
differentiable manifold. In \cite{aex} he asked if a simply 
connected compact differentiable manifold can carry both K\"ahler 
and non-K\"ahler structures. The following example answers Atiyah's question. 

\begin{example} \label{VVexample}
In \cite{vv} Tsanov noticed that the twistor space $Z(S)$ of a $K3$ surface 
$S$ is simply connected and admits both K\"ahler and non-K\"ahler complex 
structures. Indeed, the quaternions $\H$ act as parallel endomorphisms 
on the tangent bundle $TS$. Fix the standard basis $\{I, J, K\}$ of $\H$. 
This gives a trivialization of $Z(S)$, i.e., $Z(S)$ is diffeomorphic 
to $S \times \mathbb {CP}^1$. Notice that since every $K3$ surface is 
K\"ahler, $S \times \mathbb {CP}^1$ admits a K\"ahler product structure, 
however $Z(S)$ endowed with the twistor complex structure is not 
K\"ahler by Hitchin's result (Theorem \ref{kahl}). 
\end{example}

For any anti-self-dual complex surface $S$ we can consider $Z(S)$ 
as the projectivized bundle ${\mathbb P} (P{\mathbb S}^+)$ 
of positive pure spinors on $S$ (Proposition 9.8 in \cite{lm}). For the 
tautological complex structure $J$ on $Z(S)$, the projection 
$p: (Z(S),J) \rightarrow S$ is not a holomorphic map in general. However, 
there exists a complex structure $J'$ on $Z(S)$ such that 
$p: (Z(S),J') \rightarrow S$ becomes a holomorphic $\mathbb {CP}^1$-bundle. 
Tsanov proved that $(Z(S), J)$ and $(Z(S), J')$ are not deformation equivalent 
to each other, and therefore the moduli space of complex structures on 
the complex manifold $Z(S)$ is not connected. The method used in \cite{vv} is 
an explicit computation of the first Chern classes of $(Z(S), J)$ and 
$(Z(S), J')$. Two complex structures on a $3$-dimensional complex manifold 
are homotopic if and only if their first Chern classes coincide. Tsanov 
shows that there is no diffeomorphism $\phi$ of $Z(S)$ such that 
$\phi^*$ sends $c_1(Z(S), J)$ to $c_1(Z(S), J')$. 

\begin{example} 
Let $\Sigma_g$ be a Riemannian surface of genus $g \geq 2$. 
Kato \cite{kato} considers the twistor space $Z(S)$ of the ruled surface 
$S=\mathbb {CP}^1 \times \Sigma_g$. By Theorem \ref{kahl}, $Z(S)$ is 
non-K\"ahler. Kato showed that there are no non-constant meromorphic 
functions on $Z(S)$. A classical result by Catanese states that the 
existence of a non-constant holomorphic map from a compact K\"ahler manifold 
to a compact Riemannian surface of genus $g \geq 2$ is determined by its 
topology. Kato's example shows that the K\"ahler assumption is essential and 
cannot be removed from Catanese's result. In \cite{lk} we described explicitly 
the complex structures of $S$ in terms of holomorphic sections 
$S \rightarrow Z(S)$. Kato's example fits into the theory of scalar-flat 
K\"ahler surfaces, which are explored in the key papers \cite{leb} of LeBrun 
and \cite{klp} of Kim, LeBrun and Pontecorvo.  

\end{example}

\section{Twistor Spaces of Hyperk\"ahler Manifolds}

\begin{definition}
A $4n$-dimensional Riemannian manifold is called hyperk\"ahler 
if its holonomy group is contained in the compact symplectic 
group $\Sp(n)$. 
\end{definition}

Since $\Sp(n) = \Sp(2n, {\mathbb C}) \cap \U(2n)$ is a subgroup of 
$\SU(2n)$, by Berger's 
classification, every hyperk\"ahler manifold is K\"ahler with zero 
Ricci curvature.

\begin{definition}
A $4n$-dimensional Riemannian manifold is called quaternion K\"ahler 
if its holonomy group is contained in $\Sp(n) \cdot \Sp(1)$. 
\end{definition}

On the other hand, $\Sp(n) \cdot \Sp(1)$ is not a subgroup of 
$\U(n)$, and therefore a general quaternion K\"ahler manifold is 
not K\"ahler. If $n \geq 2$, any quaternion K\"ahler manifold is 
Einstein. 

From now on we assume that $M$ is a hyperk\"ahler manifold of dimension 
${\rm dim}_\R M = 4n$. Then the imaginary quaternions act 
on $TM$ as parallel endomorphisms. Fix the standard basis $\{I, J, K\}$ of 
the imaginary quaternions. Both Salamon \cite{sal} and B\'erard-Bergery 
(see Theorem 14.9 
and Definition 14.67 in \cite{bes}) independently introduced an alternative 
notion of a twistor space for quaternion K\"ahler manifolds. Let $E$ be 
the $3$-dimensional vector subbundle of ${\rm End}(TM)$ spanned by 
$\{I, J, K\}$. The vector bundle $E$ carries a natural Euclidean structure, 
with respect to which $\{I, J, K\}$ is an orthonormal basis. 

\begin{definition}
The unit-sphere subbundle $Z_0(M)$ of $E$ is the quaternion 
twistor space of $M$.  
\end{definition}

Salamon \cite{sal} proved that the quaternion twistor space $Z_0(M)$ of $M$ 
admits a natural complex structure such that the fibers of the projection 
$\pi: Z_0(M) \rightarrow M$ are holomorphic rational curves called 
``twistor lines''. Notice that the complex structures $\{I, J, K\}$ give a 
trivialization of $Z_0(M)$, and therefore $Z_0(M)$ is diffeomorphic to 
$M \times {\mathbb S}^2 \cong M \times \mathbb {CP}^1$. 

We can also describe the quaternion twistor space $Z_0(M)$ of a hyperk\"ahler 
manifold $M$ as follows. We set $Z_0(M) = \mathbb {CP}^1 \times M$, where 
$\mathbb {CP}^1$ is identified with the unit sphere ${\mathbb S}^2 \subset 
\H$ of complex structures on $M$. We define the ``tautological'' (almost) 
complex structure $J'$ on $Z_0(M)$ in the same way as in the case of 
Riemannian $4$-folds. Denote by $\pi: Z_0(M) \rightarrow M$ the projection. 
Every point $z \in Z_0(M)$ is of the form $z = (\alpha, \pi(z))$, where 
$\alpha \in \mathbb {CP}^1$. Let $K$ be the natural complex structure of 
$\mathbb {CP}^1$. Then we define the ``tautological'' almost 
complex structure $J'$ on $T_zZ_0(M)$ by $J_z' = (K, \alpha)$, where 
$\alpha \in \mathbb {CP}^1$ is considered as a complex structure on 
$T_{\pi(z)} M$. This defines an integrable almost complex structure $J'$ on 
$Z_0(M)$ by Salamon \cite{sal} and $Z_0(M)$ becomes a $(2n+1)$-dimensional 
complex manifold. The quaternion twistor space $Z_0(M)$ is an almost 
complex submanifold (and a subbundle) of the ``big'' twistor space $Z(M)$. 

As Joyce notes in \cite{joy}, the quaternion twistor space $Z_0(M)$ 
of a hyperk\"ahler manifold $M$ 
satisfies very similar properties to the $4$-dimensional Riemannian 
case. The projection $p: (Z_0(M), J') \rightarrow \mathbb {CP}^1$ 
is holomorphic and $p^{-1}(\alpha)$ is isomorphic to the complex 
manifold $(M, \alpha)$ for any $\alpha \in \mathbb {CP}^1$. There is an 
antiholomorphic symplectic involution $\iota : Z_0(M) \rightarrow Z_0(M)$ 
defined by $\iota(\alpha, x) = (-\alpha, x)$. Consider the projection 
$\pi: Z_0(M) \rightarrow M$. For every point $x \in M$ the fiber over $x$ 
is called a {\it twistor line} and it is a holomorphic rational curve 
(\cite{sal}) with normal bundle ${\cal O}(1)^{\oplus 2n}$. The twistor lines 
are preserved by the involution $\iota$ and the restriction of $\iota$ on 
a twistor line coincides with the antipodal map. As in Theorem \ref{jahs}, 
the holomorphic data above is sufficient to show that a given 
$(2n+1)$-dimensional complex manifold with this data is biholomorphic to a 
quaternion twistor space of a hyper-complex manifold equipped with a 
pseudo-hyperk\"ahler metric (Theorem 7.1.4 in \cite{joy}).

\begin{theorem} \label{mainthm}
Let $M$ be a hyperk\"ahler manifold of real dimension $4n$. Then 
the quaternion twistor space $Z_0(M)$ is a simply connected compact 
manifold that carries both K\"ahler and non-K\"ahler complex structures. 
\end{theorem}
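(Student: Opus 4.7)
The plan is to leverage the trivialization $Z_0(M)\cong M\times \mathbb{CP}^1$ already set up in the paper. Since $M$ is compact (otherwise $Z_0(M)$ would not be) and every irreducible compact hyperk\"ahler manifold is simply connected, $M\times\mathbb{CP}^1$ is compact and simply connected, which settles the topological claims. The K\"ahler complex structure is also immediate: for the \emph{product} complex structure on $M\times\mathbb{CP}^1$, the sum of any K\"ahler form on $M$ (which exists since a hyperk\"ahler manifold is K\"ahler) and the Fubini--Study form on $\mathbb{CP}^1$ is a K\"ahler form.

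The substantive content is to show that the \emph{twistor} complex structure $J'$ admits no K\"ahler form. I would argue by contradiction: suppose $\omega$ is a K\"ahler form on $(Z_0(M),J')$ and use the holomorphic projection $p\colon(Z_0(M),J')\to\mathbb{CP}^1$. Each fiber $p^{-1}(\alpha)$ is the K\"ahler manifold $(M,\alpha)$, and $\omega|_{p^{-1}(\alpha)}$ is a K\"ahler form on it. Because the inclusions $i_\alpha\colon p^{-1}(\alpha)\hookrightarrow Z_0(M)$ form a smooth isotopy of maps into $Z_0(M)$, the pullbacks $i_\alpha^{*}\colon H^{2}(Z_0(M),\R)\to H^{2}(M,\R)$ all coincide, so the class $\eta:=i_\alpha^{*}[\omega]\in H^{2}(M,\R)$ is independent of $\alpha$. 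Thus $\eta$ represents a K\"ahler class on $(M,\alpha)$ for every $\alpha\in\mathbb{CP}^1$.

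I would then invoke the Fujiki--Bogomolov formula for an irreducible hyperk\"ahler $M$ of complex dimension $2n$: there is a positive constant $c_M$ and a nondegenerate quadratic form $q$ on $H^{2}(M,\R)$ (the Beauville--Bogomolov form) with
\[
\int_M\xi^{2n}=c_M\,q(\xi)^n,
\]
whose polarization gives $\int_M\xi\wedge\zeta^{2n-1}=c_M\,q(\xi,\zeta)\,q(\zeta)^{n-1}$. Applied with $\xi=\eta$ and $\zeta=\omega_\alpha$ (the K\"ahler form of $(M,\alpha)$, which satisfies $q(\omega_\alpha)>0$), the positivity of the wedge of two positive $(1,1)$-forms forces $\int_M\eta\wedge\omega_\alpha^{2n-1}>0$, whence $q(\eta,\omega_\alpha)>0$ for every $\alpha\in\mathbb{CP}^1$.

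To close the argument I would parametrize $\mathbb{CP}^1$ as the unit sphere in the imaginary quaternions, $\alpha=aI+bJ+cK$ with $a^{2}+b^{2}+c^{2}=1$, so that $\omega_\alpha=a\omega_I+b\omega_J+c\omega_K$ and $q(\eta,\omega_\alpha)=aA+bB+cC$ with $A=q(\eta,\omega_I)$, $B=q(\eta,\omega_J)$, $C=q(\eta,\omega_K)$. This is a \emph{linear} function on $\mathbb{S}^{2}$ and cannot be strictly positive everywhere (antipodal points give opposite signs), the desired contradiction. The delicate step is the Fujiki--Bogomolov input: one must first reduce to the irreducible case via the Beauville--Bogomolov decomposition (available because $M$ is simply connected compact hyperk\"ahler) and then keep careful track of the positivity; once that is in place, the linear-on-the-sphere step is immediate.
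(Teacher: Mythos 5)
Your proof is correct, but the non-K\"ahler half runs along a genuinely different track from the paper's. The paper argues globally: it shows $H^0(Z_0(M),\Omega^i_{Z_0(M)})=0$ for $i>0$ by restricting to twistor lines (whose normal bundle is ${\cal O}(1)^{\oplus 2n}$), concludes via Hodge theory and the Kodaira embedding theorem that a K\"ahler $Z_0(M)$ would be \emph{projective}, hence every fiber of $p$ would be projective, contradicting Fujiki's theorem that the very general fiber of the twistor family carries no curves or effective divisors. You instead argue fiberwise: restrict a hypothetical K\"ahler class to the fibers $(M,\alpha)$, observe the restricted class $\eta$ is independent of $\alpha$, and derive a sign contradiction from the quaternionic rotation $\omega_\alpha=a\omega_I+b\omega_J+c\omega_K$. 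Your route is more self-contained in that it avoids both Kodaira embedding and Fujiki's non-projectivity theorem; the paper's route avoids the Beauville--Bogomolov machinery. Two remarks on yours. First, the Fujiki--Bogomolov form is actually unnecessary: since $\eta$ is K\"ahler on $(M,\alpha)$ one has $\int_M\eta\wedge\omega_\alpha^{2n-1}>0$ directly, and since $\omega_{-\alpha}=-\omega_\alpha$ and $2n-1$ is odd, the same positivity at the antipode gives $\int_M\eta\wedge\omega_{-\alpha}^{2n-1}=-\int_M\eta\wedge\omega_\alpha^{2n-1}<0$, a contradiction with no irreducibility hypothesis and no reduction via the Beauville--Bogomolov decomposition; this strips out the only delicate step you flag. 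Second, both you and the paper quietly assume $\pi_1(M)=0$, which does not follow from the stated definition (holonomy \emph{contained} in $\Sp(n)$ admits tori); you at least name the assumption (irreducibility, or one can just add simple connectivity of $M$ as a hypothesis), so you are no worse off than the original on this point.
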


\begin{proof} 
%If $n=1$, i.e., $\dim M_\R =4$ and $M$ is a $K3$ surface, the twistor 
%spaces $Z(M)$ and $Z_0(M)$ coincide and this example was observed 
%earlier (see Example \ref{VVexample}). For the rest of the proof assume 
%$\dim M_\R =4n > 4$. 
From the definition of $Z_0(M)$ it is clear that the quaternion 
twistor space is diffeomorphic to the product of complex manifolds 
$M \times \mathbb {CP}^1$, which admits a product K\"ahler structure. 
We have that $\pi_1(Z_0(M)) = \pi_1(M) \times \pi_1({\mathbb S}^2)=0$, 
i.e., the quaternion twistor space is simply connected. 

Now consider $Z_0(M)$ together with its twistor complex structure $J'$. 
For a compact hyperk\"ahler manifold $M$ 
Fujiki \cite{fuj} showed that the general fiber of the twistor family 
$p: (Z_0(M), J') \rightarrow \mathbb {CP}^1$ does not contain neither 
effective divisors nor curves, and therefore the general fiber is not 
projective. 

Notice that $H^0(Z_0(M), \Omega^i_{Z_0(M)})=0$ for $i>0$. Let's first 
show this for $i=1$. If we assume there is a section 
$\sigma \in H^0(Z_0(M), \Omega^1_{Z_0(M)})$, then $\sigma$ defines a 
linear map $TZ_0(M)|_{\mathbb CP^1} \rightarrow \mathbb C$, where 
$\mathbb CP^1$ is a twistor line of the projection 
$\pi: Z_0(M) \rightarrow M$. Since the normal bundle of a twistor line 
is ${\cal O}(1)^{\oplus 2n}$ by \cite{sal}, we have the following 
splitting: $TZ_0(M)|_{\mathbb CP^1} = T{\mathbb CP^1} \oplus 
N_{{\mathbb CP^1}/Z_0(M)} = {\cal O}(2) \oplus {\cal O}(1)^{\oplus 2n}$ 
for every twistor line, i.e., for every fiber of $\pi$. 
But since the dual ${\cal O}(-1)$ of the hyperplane bundle 
doesn't have non-trivial sections, it follows that $\sigma=0$. Since 
we can express $\Lambda^i TZ_0(M)|_{\mathbb CP^1}$ as a direct sum of tensor 
powers of ${\cal O}(1)$, we can use the same argument for $i>1$. 

Assume that $Z_0(M)$ is K\"ahler, then $H^0(Z_0(M), \Omega^2_{Z_0(M)})=0$ and 
$H^2(Z_0(M))=H^{1,1}(Z_0(M)) \not=0$ 
and $H^{1,1}(Z_0(M))$ would contain rational classes, hence $Z_0(M)$ is 
projective by the Kodaira embedding theorem. However, every closed 
analytic subvariety of a projective variety is projective. Then the 
fibers of the twistor family $p: Z_0(M) \rightarrow \mathbb {CP}^1$ 
would be smooth projective varieties, which contradicts Fujiki's 
result. Therefore, the quaternion twistor space $Z_0(M)$ together with the 
tautological twistor complex structure is not K\"ahler, and the underlying 
simply connected differentiable manifold $M \times \mathbb {CP}^1$ carries 
both K\"ahler and non-K\"ahler complex structures. 
\end{proof}

\begin{remark}
Claude LeBrun pointed out an alternative argument that the complex 
structure $J'$ on the ``small'' twistor space $Z_0(M)$ is non-K\"ahler 
for a hyperk\"ahler manifold $M$ of complex dimension $2n$. Let 
$p: Z_0(M) \rightarrow \mathbb {CP}^1$ be the holomorphic twistor map. 
Hitchin's argument \cite{hit} on the action of the real structure on 
$H^{1,1}(Z_0(M))$ shows that if $J'$ is a K\"ahler sructure, then 
the anti-canonical line bundle $K^*$ is ample. In the hyperk\"ahler 
case, $K^* = p^* {\cal O} (2n+2)$ is a pull-back from the base $\mathbb {CP}^1$ 
via the projection $p: (Z_0(M), J') \rightarrow \mathbb {CP}^1$, 
and hence it is trivial on every fiber of 
$p: Z_0(M) \rightarrow \mathbb {CP}^1$. Therefore, its sections 
are pull-backs, too. Then the linear system of any power of $K^*$ 
collapses every fiber of $Z_0(M) \rightarrow \mathbb {CP}^1$. 
This contradicts ampleness of $K^*$. By the same argument, the algebraic 
dimension of $Z_0(M)$ is $a(Z_0(M))=1$ for a 
hyperk\"ahler manifold $M$. 
\end{remark}

If $M$ is a hyperk\"ahler manifold, then there exists a universal 
deformation space ${\cal U} \rightarrow \text{Def}(M)$ of $M$, where 
the base $\text{Def}(M)$ is smooth. The quaternion twistor space 
$p: Z_0(M) \rightarrow \mathbb {CP}^1$ induces a non-trivial map 
$\mathbb {CP}^1 \rightarrow \text{Def}(M)$. In \cite{fuj2} Fujiki showed 
that the corresponding 1-dimensional subspace of the Zariski tangent space 
$H^1(M, T^{1,0}M) \cong H^{1,1}(M)$ of $\text{Def}(M)$ is spanned by 
the K\"ahler class of the given hyperk\"ahler metric on $M$, which 
coincides with the Kodaira-Spencer class. 

\begin{example}
In \cite{le} Claude LeBrun gives the following example. Let $M$ be a 
hyperk\"ahler manifold of complex dimension $2n$ and  
let $f: \rightarrow \mathbb {CP}^1$ be a ramified cover 
of degree $k$. Consider the pull-back $\hat Z_0(M) = f^*Z_0(M)$ of the map 
$p: Z_0(M) \rightarrow \mathbb {CP}^1$ via $f$ and let ${\hat p}=f^*p$ be 
the associated holomorphic submersion. Then $\hat Z_0(M)$ is 
a complex $(2n+1)$-manifold with canonical line bundle 
$K={\hat p}^* {\cal O} (-2nk-2)$. LeBrun noticed that once we take 
a ramified cover of the base $\mathbb {CP}^1$ of the twistor space, then 
the deformation space of $\hat Z_0(M)$ is the deformation space of 
the corresponding rational curve in the Teichm\"uller space, which grows 
as the degree $k$ of the cover grows. The spaces of type $\hat Z_0(M)$ can 
also be constructed if one takes any rational curve in the Teichm\"uller 
space and restricts the universal fibration $\cal U$ to this curve. 
The methods used in the proof of our Theorem \ref{mainthm} also show that 
$\hat Z_0(M)$ is non-K\"ahler. LeBrun's example is related to his earlier 
work \cite{lebr}, where he shows that if $S$ is any complex surface with 
even Betti number $b_1$, and if $M= S \# k (-\mathbb {CP}^2)$ is its 
blow-up at a large number $k>>0$ of points, then the twistor space $Z(M)$ 
admits both K\"ahler and non-K\"ahler complex structures, and moreover, 
they have different Chern numbers. 
\end{example}

\begin{remark} 
Let $M$ be a hyperk\"ahler manifold of real dimension $4n$. When $n>1$, 
the twistor space $Z(M)$ is integrable if and only if $M$ is flat, i.e., 
$M$ is a torus, because the vanishing of the Weyl tensor (as required by 
Theorem \ref{compl}) for a Ricci-flat manifold implies the vanishing of the 
entire curvature tensor. However, it is still an interesting question to 
answer when the almost complex manifold $Z(M)$ is diffeomorphic to a product 
$M \times \Gamma_{2n}$.  In particular, it would be 
interesting to know if there is a relation that the 
characteristic classes of a hyperk\"ahler complex $2n$-fold $M$ have to 
satisfy so that $Z(M)$ is a trivial bundle. 
\end{remark}

\section*{Acknowledgements}
The author is grateful to Vasil V. Tsanov for introducing her to the 
subject of twistor spaces and for his support and mentoring during her 
undergraduate years at Sofia University in Bulgaria. He will be 
truly missed. The author also thanks Nigel Hitchin, Claude LeBrun and 
Simon Salamon for very interesting 
ideas and communications about twistor techniques. 
She is supported by a grant from the Simons Foundation/SFARI (522730, LK).

%\section*{Bibliography}

Ljudmila Kamenova \\
Department of Mathematics, Room 3-115 \\
Stony Brook University \\
Stony Brook, NY 11794-3651, USA \\
{\it E-mail address}: {\tt kamenova@math.stonybrook.edu}\\[0.3cm]

\label{last}
\end{document}